\definecolor{mat}{HTML}{ffd6ad}
\definecolor{jon}{HTML}{7fdfff}
\newtheorem{thm}{Theorem}[subsection]
\newtheorem{lemma}[thm]{Lemma}
\newtheorem{proposition}[thm]{Proposition}
\theoremstyle{definition}
\newtheorem{definition}[thm]{Definition}
\newtheorem{remark}[thm]{Remark}
\newcommand\Ra{\Rightarrow}
\newcommand\slice[1]{_{/#1}}
\newcommand\pbmark{\ar[dr, phantom, "\ulcorner" very near start, shift right=1ex]}
\newcommand\Sh[1]{\mathrm{Sh}\!\left(#1\right)}
\newcommand\iSh[1]{\mathrm{Sh}_\infty\!\left(#1\right)}
\newcommand\Op[1]{\mathrm{Op}\left(#1\right)}
\newcommand\cB{\mathscr{B}}
\newcommand\cC{\mathscr{C}}
\newcommand\cD{\mathscr{D}}
\newcommand\cE{\mathscr{E}}
\newcommand\cF{\mathscr{F}}
\newcommand\cL{\mathscr{L}}
\newcommand\cO{\mathscr{O}}
\newcommand\cR{\mathscr{R}}
\newcommand\cS{\mathscr{S}}
\newcommand\cT{\mathscr{T}}
\newcommand\cU{\mathscr{U}}
\newcommand\cW{\mathscr{W}}
\newcommand\BB{\mathbb{B}}
\newcommand\CC{\mathbb{C}}
\newcommand\DD{\mathbb{D}}
\newcommand\EE{\mathbb{E}}
\newcommand\FF{\mathbb{F}}
\newcommand\LL{\mathbb{L}}
\newcommand\RR{\mathbb{R}}
\newcommand\TT{\mathbb{T}}
\newcommand\UU{\mathbb{U}}
\newcommand\WW{\mathbb{W}}
\newcommand{\tA}{\mathsf A}
\newcommand{\tX}{\mathsf X}
\newcommand\xto\xrightarrow
\newcommand\xot\xleftarrow
\newcommand\ot\leftarrow
\newcommand\Alpha{\mathrm{A}}
\newcommand\fun[2]{\left[#1,#2\right]}
\newcommand\oo{$\infty$\=/}
\newcommand\Arr[1]{{#1}^{\to}}
\newcommand\Locale{\mathsf{Locale}}
\newcommand\Poset{\mathsf{Poset}}
\newcommand\Set{\mathsf{Set}}
\newcommand\SET{\mathsf{SET}}
\newcommand\Cat{\mathsf{Cat}}
\newcommand\CCat{\mathsf{\CC at}}
\newcommand\CAT{\mathsf{CAT}}
\newcommand\Topos{\mathsf{Topos}}
\newcommand\LFib{\mathsf{LFib}}
\newcommand\CcFib{\mathsf{CcFib}}
\newcommand\Fam{\mathsf{Fam}}
\newcommand\op{^\mathrm{op}}
\newcommand\sfop{^\mathsf{op}}
\newcommand\size[1]{^{<#1}}
\title{
Smooth and proper maps \\
with respect to a fibration
\footnote{This material is based upon work supported by the Air Force Office of Scientific Research under award number FA9550-20-1-0305 and by the US Army Research Office under MURI Grant W911NF-20-1-0082 which the second author was hosted under at the Department of Mathematics at Johns Hopkins University. 
The second author wishes to thank the Max Planck Institute for Mathematics in Bonn for its hospitality and financial support during some stages of this project.}
}
\author{Mathieu Anel
\footnote{
Department of Philosophy, 
Carnegie Mellon University,
\href{mailto:mathieu.anel@protonmail.com}{mathieu.anel@protonmail.com}
}
\and
Jonathan Weinberger
\footnote{
Fowler School of Engineering \& Center of Excellence in Computation, Algebra, and Topology (CECAT),
Chapman University, 
\href{mailto:jweinberger@chapman.edu}{jweinberger@chapman.edu}
}
}
\begin{document}

\maketitle

\begin{center}
{\it To André Joyal, on his $(\infty,0)$-th Birthday}
\end{center}

\begin{abstract}
This paper explain how the geometric notions of local contractibility and properness are related to the $\Sigma$-types and $\Pi$-types constructors of dependent type theory.
We shall see how every Grothendieck fibration comes canonically with such a pair of notions---called smooth and proper maps---and how this recovers the previous examples and many more.
This paper uses category theory to reveal a common structure between geometry and logic, with the hope that the parallel will be beneficial to both fields.
The style is mostly expository, the main results are proved in external references.
\end{abstract}

\tableofcontents

\section{Introduction}



Dependent type theory is based on the notion of family of types indexed by a type, and the basic operations are the reindexing, and the sums and products of such families: the $\Sigma$-types and $\Pi$-types constructors. 
On the other hand, the geometer's toolbox contains methods to study spaces by means of ``bundles,'' that is using families of spaces indexed by the space of interest (vector bundles, sheaves\ldots).
There also, bundles can be pulled back along a morphism and sometimes pushed forward in two different ways (additively or multiplicatively).
This has suggested an analogy where $\Sigma$-types can be understood as total spaces of families, while $\Pi$-types can be regarded as spaces of sections of families.

The fact that the pushforwards are not always defined is the source of an interesting feature: it distinguishes the classes of maps along which these pushforwards exists: locally contractible and proper morphisms.
\medskip
In \cref{sec:def}, we introduce an abstract notion of \emph{smooth} and \emph{proper maps} associated to any category $\CC$.
This is done in the setting of fibered/indexed categories over a base category $\cB$.
The smooth (proper) maps are the maps $u:X\to Y$ in $\cB$ along which the base change functor $u^*:\CC(Y)\to \CC(X)$ admits a left adjoint $u_!$ (a right adjoint $u_*$) compatible with reindexing/base change (aka the Beck--Chevalley conditions).
Then \Cref{sec:ex} details many examples in logic, category theory, topology, and geometry, where we show how our abstract definitions connect to existing notions of smoothness and properness.

\paragraph{Acknowledgments}
The authors are happy to dedicate this paper to André Joyal on the occasion of his 80th birthday.
The focus on examples is an hommage to André, who has drilled their importance into the head of the first author.

The authors would like to thank the anonymous referee for valuable feedback, as well as
Carlo Angiuli,
Steve Awodey,
Reid Barton,
Denis-Charles Cisinski,
Jonas Frey,
Louis Martini,
Anders Mörtberg,
Emily Riehl,
Jon Sterling,
Thomas Streicher,
Andrew Swan,
and Sebastian Wolf
for numerous discussions around this material.
Special thanks to Denis-Charles Cisinski for his comments on an earlier version, and to Jon Sterling for pointing out to us the example of dominances.

\paragraph{Conventions}
The paper is written in the context of \oo categories \cite{Lurie:HTT,Cisinski:book}, but we are simplifying the terminology and simply say ``category'' for \oo category and  ``1-categories'' for the truncated notion.
We denote by $\Set$ ($\SET$) the 1-category of small (large) sets,
and by $\Cat$ ($\CAT$) the (\oo)category of small (large) (\oo)categories.  Consequently, we refer to \oo functors simply as ``functors,'' so that under the (\oo categorical) Grothendieck construction small fibered categories correspond to functors into $\Cat$. The categories of functors from $C$ to $D$ is denoted $\fun CD$.
The arrow category of a category $C$ is denoted $\Arr C$.

\section{Abstract setting}
\label{sec:def}
\label{setting}

Let $\kappa$ be a cardinal and $\Set\size\kappa$ be the category of sets of cardinality strictly smaller than $\kappa$.
The elementary operations on sets are the sum and product of a family of sets.
For $I$ an arbitrary set, an $I$-indexed family ($I$-family for short) in $\Set\size\kappa$ is a functor $I\to \Set\size\kappa$, the sum and product of $I$-families are the left and right adjoint to the constant family functor $\Set\size\kappa \to \fun I {\Set\size\kappa}$.
Given $\kappa$, one can ask for what sets $I$ the sum and product of $I$-families of $\kappa$-small sets are $\kappa$-small.
Let $\sigma$ be a cardinal such that for any cardinal $\rho<\kappa$ we have $\sigma\rho<\kappa$.
Then $\Set\size\kappa$ admits sums indexed by objects in $\Set\size\sigma$.
Let $\Sigma(\kappa)$ be the supremum of all such cardinals $\sigma$.
We shall call $\Sigma(\kappa)$ the {\it smooth bound} of $\kappa$.
The cardinal $\kappa$ is \emph{regular} if and only if $\kappa=\Sigma(\kappa)$.
Similarly, let $\pi$ be a cardinal such that for any cardinal $\rho<\kappa$ we have $\rho^\pi<\kappa$.
Then $\Set\size\kappa$ admits products indexed by objects in $\Set\size\pi$.
Let $\Pi(\kappa)$ be the supremum of all such cardinals $\pi$.
We shall call $\Pi(\kappa)$ the {\it proper bound} of $\kappa$.
The cardinal $\kappa$ is \emph{inaccessible} if and only if $\kappa=\Pi(\kappa)=\Sigma(\kappa)$.

More generally, we can replace the category $\Set\size\kappa$ by any category $C$ and extract the classes of sets $\Sigma(C)$ and $\Pi(C)$ indexing the sums and products which exist in $C$.
We shall call $\Sigma(C)$ the {\it smooth calibration} of $C$
and $\Pi(C)$ the {\it proper calibration} of $C$.%
\footnote{The names {\it smooth} and {\it proper} are taken from Grothendieck in Pursuing Stacks \cite{Maltsiniotis:aspheric} and the example of left fibrations of categories. The name {\it calibration} is borrowed from Bénabou \cite{Benabou:calibration}, even if his notion is slightly different.}

We are going to propose an abstract setting for the definition of smooth and proper calibrations and illustrate it with many examples.
The category $C$ will be a fibration over some base category $B$ with finite limits,
and the calibrations $\Sigma(C)$ and $\Pi(C)$ will be defined as subfibrations of the codomain fibration of $B$.

\subsection{Calibrations and families}

We fix a category $\cB$ with finite limits.

\begin{definition}[$\cB$-category]
\label{def:category}
A functor $\CC:\cB\op\to \CAT$ is be called a {\it $\cB$-category} (or simply a category when $\cB$ is clear from the context).
A ($\cB$-)functor between $\cB$-categories is simply a natural transformation.
There is an equivalence between the category of $\cB$-categories and the category of fibrations in categories (aka Grothendieck fibrations, or cartesian fibrations) $\cC\to \cB$ \cite[Theorem 3.2.0.1 and Section 3.3.2]{Lurie:HTT}.
We shall treat the two objects $\CC$ and $\cC$ as equivalent, and go back and forth between them.
We systematically use the same letter in different fonts $\CC$/$\cC$ to denote whether we are considering the functorial (aka indexed) or the fibrational point of view on $\cB$-categories.
\end{definition}

\begin{definition}[Universe]
\label{def:universe}
Since $\cB$ is assumed with finite limits, the codomain functor $\Arr\cB\to \cB$ is a cartesian fibration and we denote by $\BB:\cB\op\to \CAT$ the corresponding functor (sending an object $X$ to the slice category $\cB\slice X$).
We shall refer to $\BB$ as the {\it universe} of $\cB$.
\end{definition}

Since $\cB$ has a terminal object, the terminal functor $1:\cB\op\to \CAT$, is a subfunctor of the universe $\BB$.
The corresponding fibration is the identity of $\cB$.
Equivalently, it corresponds to the subcodomain fibration $\cB=\cB^\simeq\subset \Arr\cB$ spanned by the isomorphisms.%
\footnote{We do not distinguish equivalent categories here.}

\begin{definition}[Calibration]
\label{def:calibration}
A {\it calibration} is a subuniverse $\UU\subset \BB$ (i.e. a subfibration $\cU\subset \Arr\cB$).
The {\it constant calibration} is the calibration $1\to \BB$.
A calibration is {\it pointed} if it contains the constant calibration (equivalently, if $\cU\subset \Arr\cB$ contains all isomorphisms).
A calibration $\UU\subset \BB$ is {\it regular} if it is pointed and the corresponding class of maps in $\cB$ is closed under composition.
\end{definition}

A regular calibration defines a wide subcategory of $\cB$ (since it contains all isomorphisms).
In fact, regular calibrations are in bijection with wide subcategories of $\cB$ whose maps are closed under base change in $\cB$.

\begin{definition}[Family construction]
\label{def:family}
We fix a $\cB$-category $\CC$, with associated fibration $\cC\to \cB$. 
Let $\cU\subset \Arr\cB \xto {cod} \cB$ be a subfibration of the codomain fibration.
The fibration $\Fam_\cU(\cC)$ of {\it $\cU$-indexed families of objects in $\cC$} is defined as the functor $\phi$ in the diagram
\[
\begin{tikzcd}
\Fam_\cU(\cC)\ar[r] \ar[d]\ar[dd,bend right,"\phi"']\pbmark &\cC \ar[d]\\
\cU\ar[r,"dom"]\ar[d,"cod"]&\cB\\
\cB
\end{tikzcd}
\]
(where the square is a fiber product).
Following our typographic convention, we denote by $\Fam_\UU(\CC)$ the associated functor $\cB\op\to \CAT$ (and call it the $\cB$-category of {\it $\UU$-indexed families of objects in $\CC$}).
Its value at an object $I$ in $\cB$ is the category of pairs $(u:U\to I \in \cU,C\in \CC(U))$ (with the obvious notion of morphism).
\end{definition}

\subsection{Sums and products}

The family construction associated to the constant calibration is the identity: $\CC = \Fam_1(\CC)$.
If $\UU$ is a pointed calibration we get a canonical functor $\Delta_\UU:\CC=\Fam_1(\CC)\to \Fam_\UU(\CC)$.

\begin{definition}
\label{def:sum-product}
Let $\UU$ be a pointed calibration on $\cB$.
A ($\cB$-)category $\CC$ has $\UU$-indexed sums if the canonical functor $\Delta_\UU:\CC\to\Fam_\UU(\CC)$ has a left adjoint.
A ($\cB$-)category $\CC$ has $\UU$-indexed products if $\CC\op$ has $\UU$-indexed sums.
\end{definition}

The following result is proved for 1-categories in \cite{Streicher:fibrations}.
The statement extends to \oo categories as well. So do a range of expected results from~\cite{Streicher:fibrations,Moens:PhD} about fibrations with internal sums. This has been developed by \cite{Buchholtz-Weinberger,Weinberger:IntlSums} in the type theory of synthetic \oo categories Riehl--Shulman \cite{Riehl-Shulman,Riehl:Notices} (internally to \emph{any} $\infty$-topos, see \cite{Shulman:topos,Riehl:Semantics,Weinberger:StrExt}). See also the work on internal $\infty$-categories in an arbitrary $\infty$-topos by Martini--Wolf~\cite{Martini:cocart,Martini-Wolf:internal,Martini-Wolf:proper}.

\begin{proposition}[B\'enabou--Streicher]
\label{prop:prop-univ-fam}
If $\UU$ is a regular calibration, then $\Fam_\UU(\CC)$ is the free cocompletion of $\CC$ for sums indexed by objects in $\UU$.
A pointed calibration $\UU$ is regular if and only if $\UU$ has $\UU$-indexed sums.
\end{proposition}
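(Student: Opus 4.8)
The plan is to realize $\Fam_\UU(-)$ as the underlying endofunctor of a lax-idempotent (Kock--Zöberlein) $2$-monad on the $2$-category $\Cat_\cB$ of $\cB$-categories, with unit $\Delta_\UU$ and multiplication given by composing indexing maps.

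First I would construct the multiplication. Unwinding \cref{def:family}, $\Fam_\UU(\Fam_\UU(\CC))$ has as objects over $I$ the data $(v\colon V\to I,\ u\colon U\to V,\ C\in\CC(U))$ with $u,v\in\cU$, and the obvious functor $\mu_\CC$ sends this to $(vu\colon U\to I,\ C)$. This is well defined precisely when $\cU$ is closed under composition, i.e.\ when $\UU$ is regular; it is a $\cB$-functor (only stability under base change needs checking) and satisfies the monad axioms by inspection. A short computation with morphisms in the family categories shows that $\mu_\CC$ is a $\cB$-natural left adjoint to $\Delta_\UU\colon\Fam_\UU(\CC)\to\Fam_\UU(\Fam_\UU(\CC))$; hence $\Fam_\UU$ is lax-idempotent and each $\Fam_\UU(\CC)$ has $\UU$-indexed sums, computed by $\Sigma=\mu_\CC$. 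In particular, since $\UU\simeq\Fam_\UU(\mathbf 1)$ for the terminal $\cB$-category $\mathbf 1$, this already gives one direction of the second claim: if $\UU$ is regular then $\UU$ has $\UU$-indexed sums, with $\Sigma\colon\Fam_\UU(\UU)\to\UU$ sending $(u\colon U\to I,\ b\colon B\to U)$ to $(ub\colon B\to I)$ --- left adjoint to $\Delta_\UU$ because $\mathsf{Hom}_{\cB\slice I}(ub,a)\simeq\mathsf{Hom}_{\Fam_\UU(\UU)(I)}\bigl((u,b),\Delta_\UU(a)\bigr)$ naturally in $a\in\UU(I)$.

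For the free-cocompletion statement, with $\UU$ regular $\Fam_\UU(\CC)$ is by construction the free $\Fam_\UU$-algebra on $\CC$. By lax-idempotence a pseudo-$\Fam_\UU$-algebra structure on a $\cB$-category $\cD$ is exactly a left adjoint to $\Delta_\UU\colon\cD\to\Fam_\UU(\cD)$, i.e.\ exactly the datum of $\UU$-indexed sums, and a functor between pseudo-algebras is a strong algebra morphism iff it preserves these adjoints, i.e.\ iff it is $\UU$-cocontinuous. The universal property of free algebras then says that restriction along $\Delta_\UU\colon\CC\to\Fam_\UU(\CC)$ is an equivalence between $\UU$-cocontinuous $\cB$-functors $\Fam_\UU(\CC)\to\cD$ and arbitrary $\cB$-functors $\CC\to\cD$, for every $\cD$ with $\UU$-indexed sums --- which is the asserted freeness. (One may also argue directly: the inverse sends $F\colon\CC\to\cD$ to $\hat F(u\colon U\to I,\ C):=u_!^{\cD}(FC)$, and conversely every object $(u\colon U\to I,\ C)$ of $\Fam_\UU(\CC)$ is canonically the $\UU$-sum along $u$ of $\Delta_\UU(C)\in\Fam_\UU(\CC)(U)$, so a $\UU$-cocontinuous functor on $\Fam_\UU(\CC)$ is determined by its restriction along $\Delta_\UU$.)

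The remaining implication --- $\UU$ has $\UU$-indexed sums $\Rightarrow$ $\UU$ is regular --- is the real content, and the Beck--Chevalley condition is essential here: a merely fiberwise left adjoint to $\Delta_\UU$ can exist even when $\cU$ is not closed under composition, the point being that it then fails to be a $\cB$-functor. Given $u\colon U\to I$ and $b\colon B\to U$ in $\cU$, the left adjoint produces $z:=\Sigma_I(u,b)\in\UU(I)$ together with a unit whose component is a map $\phi\colon B\to Z$ with $z\phi\simeq ub$; equivalently, $z\in\cU$ is a reflection of $ub$ into the full, base-change-stable subcategory $\UU(I)\subseteq\cB\slice I$. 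One must show $\phi$ is an equivalence, for then $ub\simeq z\in\cU$. The idea is to feed this back through Beck--Chevalley at the map $f:=z$ itself: reindexing along $z$ identifies $z^{*}z$ with the $\UU(Z)$-reflection of $z^{*}(ub)\simeq(z^{*}z)\circ(z^{*}\phi)$, and the diagonal section of $z^{*}z$ together with this factorization pins down the reflection unit, which one then transports back along a base-change-conservativity argument to conclude that $\phi$ was already invertible --- so that $\UU$-sums are nothing but composition, i.e.\ the sums computed in the codomain fibration $\Arr\cB$. I expect this last step --- converting the stability of the sums into their identification with composites --- to be the main obstacle; it is precisely the analysis of cocomplete subfibrations of a codomain fibration carried out for $1$-categories in \cite{Streicher:fibrations}, and the arguments are formal enough to transfer to the $\infty$-categorical setting verbatim.
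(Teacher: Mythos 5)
The paper does not actually prove this proposition: it simply cites Streicher's notes \cite{Streicher:fibrations} (and \cite{Moens:PhD}) for the 1-categorical statement and asserts that it extends to \oo categories, pointing to \cite{Buchholtz-Weinberger,Weinberger:IntlSums} and Martini--Wolf for settings where such extensions are developed. Measured against that, your reconstruction is more detailed than the paper and follows what is essentially the standard route behind the cited sources: realizing $\Fam_\UU$ as a lax-idempotent monad with unit $\Delta_\UU$ and multiplication given by composing indexing maps, so that pseudo-algebras are categories with $\UU$-indexed sums and free algebras are the free cocompletions. Your verifications there are correct: the multiplication is well defined exactly when $\cU$ is closed under composition, the fiberwise adjunction $\mathsf{Hom}(\mu(-),-)\simeq\mathsf{Hom}(-,\Delta_\UU(-))$ is as you compute it, the Beck--Chevalley/cartesianness of $\mu$ and of $\Sigma$ on $\UU\simeq\Fam_\UU(\mathbf 1)$ reduces to the pasting law for pullbacks, and the direct argument that every $(u,C)$ is the $\UU$-sum along $u$ of $\Delta_\UU(C)$ gives the freeness statement without invoking the full $2$-monadic machinery (which, in the \oo setting, would itself need some care to set up).

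The genuine soft spot is the direction ``$\UU$ has $\UU$-indexed sums $\Rightarrow$ $\UU$ regular,'' which you correctly identify as the real content but do not prove. Your analysis up to the point that $z=\Sigma_I(u,b)$ is a base-change-stable reflection of $ub$ into the full subcategory $\UU(I)\subset\cB\slice I$, with unit $\phi:B\to Z$ over $I$, is right; but the proposed mechanism for concluding that $\phi$ is invertible does not work as stated: pullback along $z$ is not a conservative functor, so there is no ``base-change-conservativity argument'' to transport invertibility back along, and the remark about the diagonal section of $z^*z$ does not by itself pin down anything---the universal property only controls maps \emph{out of} the reflected object into $\UU$-objects, whereas an inverse to $\phi$ is a map out of $Z$ into an object not yet known to lie in $\cU$. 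Closing this requires the Bénabou--Moens-style analysis of cocomplete subfibrations of the codomain fibration carried out in \cite{Streicher:fibrations}, to which you (like the paper) ultimately defer; that deferral puts you on par with the paper's own citation-level treatment, but the sketch you give in its place should not be presented as the shape of the argument, and the claim that the 1-categorical proof transfers ``verbatim'' to \oo categories is likewise an assertion the paper itself only supports by pointing to \cite{Buchholtz-Weinberger,Weinberger:IntlSums,Martini-Wolf:internal}, where the required internal/fibered \oo category theory is actually developed.
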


\begin{remark}
The notion of regular calibration corresponds in dependent type theory to the notion of a subuniverse closed under $\Sigma$-types and containing the terminal type.
\end{remark}

\begin{proposition}
\label{prop:codomain-fib}
For every category with finite limits $\cB$, the universe $\BB$ always has $\BB$-indexed sums.
A map is proper if and only if all its pullbacks are exponentiable maps in $\cB$.
In particular, the universe $\BB$ has $\BB$-indexed products if and only if $\cB$ is a locally cartesian closed category.
\end{proposition}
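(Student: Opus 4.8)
The plan is to treat the three assertions in turn; the first and third fall out of the framework already in place, and the middle one carries the actual content.

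\emph{$\BB$ has $\BB$-indexed sums.} As a calibration, $\BB$ is just the codomain fibration $\Arr\cB\to\cB$ itself, i.e.\ the calibration whose underlying class of maps consists of \emph{all} maps of $\cB$. That class contains every isomorphism and is closed under composition, so $\BB$ is a regular calibration in the sense of \Cref{def:calibration}, and the claim is an instance of \Cref{prop:prop-univ-fam} applied with $\UU=\BB$. (Concretely, the left adjoint of $\Delta_\BB\colon\BB\to\Fam_\BB(\BB)$ sends a pair $(u\colon U\to I,\ v\colon V\to U)$ to the composite $uv\colon V\to I$, and the Beck--Chevalley condition is the standard fact that post-composition commutes with pullback, which follows from the identification $A\times_XX'\simeq A\times_YY'$ attached to a pullback of $u\colon X\to Y$ along a map $g\colon Y'\to Y$.)

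\emph{A map $u\colon X\to Y$ is proper (with respect to $\BB$) iff all its pullbacks are exponentiable.} Unwinding the definition, $u$ is proper iff for every $g\colon Y'\to Y$, writing $u'\colon X'\to Y'$ for the pullback of $u$ along $g$ and $g'\colon X'\to X$ for the other leg, the functor $u'^{*}\colon\cB\slice{Y'}\to\cB\slice{X'}$ admits a right adjoint $u'_{*}$ and the canonical Beck--Chevalley map $g^{*}u_{*}\to u'_{*}g'^{*}$ is an equivalence. The forward direction is immediate, since the existence of $u'_{*}$ for all such $g$ says precisely that every pullback of $u$ is an exponentiable map of $\cB$. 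For the converse, assuming all pullbacks of $u$ exponentiable, I would check that Beck--Chevalley is then automatic by a Yoneda computation: writing $g_{!},g'_{!}$ for post-composition, for $p\in\cB\slice{X}$ and $q\in\cB\slice{Y'}$,
\begin{align*}
\cB\slice{Y'}(q,\,g^{*}u_{*}p)
&\ \simeq\ \cB\slice{Y}(g_{!}q,\,u_{*}p)
\ \simeq\ \cB\slice{X}(u^{*}g_{!}q,\,p)\\
&\ \simeq\ \cB\slice{X}(g'_{!}u'^{*}q,\,p)
\ \simeq\ \cB\slice{X'}(u'^{*}q,\,g'^{*}p)
\ \simeq\ \cB\slice{Y'}(q,\,u'_{*}g'^{*}p),
\end{align*}
naturally in $p$ and $q$, the middle isomorphism being the elementary Beck--Chevalley identity $u^{*}g_{!}\simeq g'_{!}u'^{*}$ for post-composition against pullback (again a consequence of $X'\simeq Y'\times_YX$). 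The Yoneda lemma then yields $g^{*}u_{*}p\simeq u'_{*}g'^{*}p$.

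\emph{$\BB$ has $\BB$-indexed products iff $\cB$ is locally cartesian closed.} Here I would invoke the product-side version of the B\'enabou--Streicher characterization (cf.\ \Cref{def:sum-product}): $\BB$ has $\BB$-indexed products iff every map of $\cB$ is proper with respect to $\BB$. By the previous point this holds iff every pullback of every map of $\cB$ is exponentiable; since the class of all maps is stable under pullback, this is just the requirement that every map of $\cB$ be exponentiable, equivalently that every slice $\cB\slice{X}$ be cartesian closed, which is the definition of a locally cartesian closed category.

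The step I expect to be the main obstacle is the converse in the second assertion: that Beck--Chevalley for the right adjoints is automatic once the right adjoints exist. The subtlety worth flagging is that properness quantifies over \emph{all} base changes, so one must genuinely demand exponentiability of every pullback of $u$ and not merely of $u$ itself---which is precisely why the characterization is phrased with ``all pullbacks'' and why it matches local cartesian closedness on the nose.
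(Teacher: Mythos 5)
Your proposal is correct and follows essentially the same route as the paper's sketch: the left adjoints are given by composition with the Beck--Chevalley condition coming from the pasting law for pullback squares, existence of the right adjoints along all pullbacks is exactly exponentiability (hence local cartesian closedness when demanded of all maps), and the right-adjoint Beck--Chevalley condition is deduced from the left one by adjunction. Your Yoneda computation simply spells out the mate argument that the paper invokes in one line.
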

\begin{proof}[Sketch of the proof]
The left adjoints $f_!$ always exists and are given by composition with $f$.
The Beck--Chevalley conditions are fulfilled since they amount to the cancellation property for pullback squares.
The right adjoint $f_*$ exists for every map $f$ if and only if $\cB$ is a locally cartesian closed category.
Then the Beck--Chevalley conditions are deduced by adjunction from those for $f_!$.
\end{proof}

\subsection{Smooth and proper maps}

We fix a category $\cB$ with finite limits and a $\cB$-category $\CC:\cB\op\to \CAT$.

\begin{definition}[Smooth \& proper maps]
\label{def:smooth}
\label{def:proper}
A map $u:X\to Y$ in $\cB$ is called {\it left ({right}) Beck--Chevalley} for $\CC$
if for any cartesian square 
\[
\begin{tikzcd}
\bar Y\ar[r,"\bar v"] \ar[d,"\bar u"']\pbmark &Y \ar[d,"u"]\\
\bar X\ar[r,"v"] &X
\end{tikzcd}
\]
the maps $u^*$ and $\bar u ^*$ have left ({right}) adjoints 
\[
\begin{tikzcd}
\CC(\bar Y)\ar[from=r,"\bar v^*"']
\ar[d, shift right=2, dashed,"\bar u_!"']
\ar[from=d]
\ar[d, shift left=2,dotted,"\bar u_*"]
& \CC(Y)
\ar[d, shift right=2, dashed,"u_!"']
\ar[from=d]
\ar[d, shift left=2,dotted,"u_*"]\\
\CC(\bar X)\ar[from=r,"v^*"']
& \CC(X)
\end{tikzcd}
\]
and the corresponding mate natural transformation is invertible
\[
{\bar u_!} \bar v^* \xto\sim v^* {u_!}
\qquad\qquad
\big(\ v^*{u_*} \xto\sim {\bar u_*}\bar v^*\ \big)\,.
\]
A map $u:X\to Y$ in $\cB$ is called {\it smooth}, or {\it stably left Beck--Chevalley}
({\it proper}, or {\it stably right Beck--Chevalley})
if every base change of $u$ is left (right) Beck--Chevalley.
The relation of these operations with quantification in logic is recalled in \cref{table:logic}.
\end{definition}

The classes of $\CC$-smooth and $\CC$-proper maps are closed under base change and define subfibrations of the codomain fibration of $\cB$.
Equivalently, they define regular calibrations $\Sigma(\CC) \subset \BB\supset \Pi(\CC)$ of the universe of $\BB$, called the \emph{smooth calibration} and the \emph{proper calibration} of $\CC$.
The interest of the notions is in the following result.

\begin{proposition}
The smooth (proper) calibration of $\CC$ is the largest calibration for which $\CC$ admits sums (products).
\end{proposition}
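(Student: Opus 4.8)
The plan is to reduce everything to the standard dictionary---the B\'enabou--Streicher theory of fibrations with internal sums, recalled around \cref{prop:prop-univ-fam} (see \cite{Streicher:fibrations,Moens:PhD}, and $\infty$-categorically \cite{Buchholtz-Weinberger,Weinberger:IntlSums,Martini:cocart})---which says: for a pointed calibration $\UU$, the $\cB$-category $\CC$ admits $\UU$-indexed sums if and only if for every $u\in\cU$ the base-change functor $u^*$ has a left adjoint $u_!$ and the resulting family of left adjoints satisfies Beck--Chevalley, i.e.\ for every cartesian square the mate $\bar u_!\bar v^*\to v^*u_!$ is invertible. Once this dictionary is in place the proposition is essentially immediate, and dually for products and proper maps.

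I would first make the dictionary plausible by computing $\Delta_\UU$ fiberwise. At $I\in\cB$ the fiber $\Fam_\UU(\CC)(I)$ has objects $(u\colon U\to I\text{ in }\cU,\,C\in\CC(U))$, and $\Delta_{\UU,I}(D)=(\mathrm{id}_I,D)$; a morphism $(u,C)\to(\mathrm{id}_I,D)$ is the data of the structure map $u\colon U\to I$, viewed as a morphism of $\UU$-families over $I$, together with a morphism $C\to u^*D$ in $\CC(U)$, so that $\Hom{(u,C)}{\Delta_{\UU,I}(D)}\simeq\Hom{C}{u^*D}_{\CC(U)}$. Hence $\Delta_{\UU,I}$ has a fiberwise left adjoint precisely when each $u^*$ admits a left adjoint $u_!$, the adjoint being $(u,C)\mapsto u_!C$; and asking that $\Delta_\UU$ admit a left adjoint \emph{as a morphism of fibrations over $\cB$}, which is what \cref{def:sum-product} demands, forces in addition the Beck--Chevalley isomorphisms, since the left adjoint of a fibered functor is itself fibered exactly when those mates are invertible. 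The statement for products is the $\mathrm{op}$-dual: running the same analysis with the fiberwise opposite $\CC^{\mathrm{op}}$ converts ``left adjoint'' into ``right adjoint'' and exchanges the two Beck--Chevalley conditions, so that $\CC$ admits $\UU$-indexed products iff every $u^*$ has a right adjoint $u_*$ compatibly with base change.

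With the dictionary available, here is the argument. For the maximality half: suppose $\CC$ admits $\UU$-indexed sums and let $u\in\cU$; I claim $u$ is smooth. Any base change $u'$ of $u$ again lies in $\cU$, since a calibration is closed under base change, so by the dictionary the base-change functor of $u'$ has a left adjoint, as does that of any further base change of $u'$ (also in $\cU$), and all the relevant mates are invertible; hence $u'$ is left Beck--Chevalley. As $u'$ ranged over all base changes of $u$, the map $u$ is smooth, so $\UU\subseteq\Sigma(\CC)$. For the existence half: $\Sigma(\CC)$ is itself a regular---in particular pointed---calibration by the discussion just above the proposition, and a smooth map is in particular left Beck--Chevalley, so the dictionary gives that $\CC$ admits $\Sigma(\CC)$-indexed sums. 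Putting the two halves together, $\Sigma(\CC)$ is the largest calibration for which $\CC$ admits sums. The case of $\Pi(\CC)$ and products follows verbatim, replacing $u_!$ by $u_*$, left adjoints by right adjoints, and ``left Beck--Chevalley'' by ``right Beck--Chevalley''.

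There is no genuinely hard step, but the point that needs care is the last clause of the dictionary: a \emph{fibered} adjoint of $\Delta_\UU$ packages the Beck--Chevalley conditions and is strictly more than a mere family of fiberwise adjoints. This is classical in the $1$-categorical case \cite{Streicher:fibrations} and is supplied $\infty$-categorically by the references cited above, so I would invoke it rather than reprove it; the only other thing worth double-checking is the op-dual bookkeeping in the product/proper case---one should not attempt the naive Hom-computation of the second paragraph for right adjoints, but pass to $\CC^{\mathrm{op}}$ instead. Granting the dictionary, the whole content of the proposition is the observation that a calibration being closed under base change is exactly what upgrades ``left Beck--Chevalley for every $u\in\cU$'' into ``smooth for every $u\in\cU$''.
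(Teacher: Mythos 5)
Your proof is correct and takes essentially the same route as the paper: the paper's own proof is exactly the step you call the ``dictionary''---unfolding the (fibred) adjoint condition on $\Delta_\UU$ into ``every map of $\cU$ has $u_!$ (resp.\ $u_*$) with invertible Beck--Chevalley mates,'' citing Streicher---after which maximality and existence follow from closure of calibrations under base change, just as you argue. Your additional care in the product case (passing to the fibrewise opposite rather than attempting the naive Hom computation) matches the paper's intended reading of \cref{def:sum-product}, which its one-line proof leaves implicit.
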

\begin{proof}
Unfolding the condition that $\Delta_\UU:\CC\to\Fam_\UU(\CC)$ has a left (right) adjoint, 
we find that the left adjoint exists if and only if all maps in $\cU$ are smooth (proper), see \cite{Streicher:fibrations}.
\end{proof}

\begin{remark}
\label{rem:BC}
When $\cB$ is the 1-category of sets or the \oo category of spaces, the Beck-Chevalley conditions simplify for the functors $\CC:\cB\op\to \CAT$ which send colimits to limits.
In that case, we have $\CC(X) = \CC(1)^X$ and the compatibility with base change holds for free as soon as the adjoint functors exist.
\end{remark}

The characterization of the smooth and proper maps can be quite difficult in practice.
In the setting where the category $\CC$ has a forgetful functor into the universe $\BB$ (typically, when $\CC$ classifies objects in $\BB$ with an extra structure, notably when $\CC$ is a calibration) stricter notions of smooth and proper maps can be defined which are easier to characterize in practice.

\begin{definition}[Strict smoothness/properness]
Let $\CC:\cB\op \to \CAT$ be equipped with a natural transformation $U:\CC\to\BB$ to the universe of $\BB$ (typically the inclusion of a subuniverse).
For $f:X\to Y$ a map in $\cB$, we have a commutative square
\[
\begin{tikzcd}
\CC(Y)\ar[d,"f^*"'] \ar[r,"U_Y"]& \cB\slice Y \ar[d,"f^*"]\\
\CC(X) \ar[r,"U_X"]& \cB\slice X\,.
\end{tikzcd}
\]
Recall from \cref{prop:codomain-fib} that the functor $f^*:\cB\slice Y\to \cB\slice X$ has always a left adjoint $f_!$ given by composition with $f$ and that the right adjoint $f_*$ exists if and only if $f$ is an exponentiable map in $\cB$.
We shall say that a $\CC$-smooth map is {\it strictly smooth} if the mate $f_!U_X \to U_Yf_!$ is invertible.
We shall say that a $\CC$-proper map is {\it strictly proper} if it is exponentiable in $\cB$ (\emph{i.e.}, if $f_*$ exists) and if the mate $U_Yf_* \to f_*U_X$ is invertible.
%
Essentially, this says that a map $f$ is strictly smooth if $f_!$ can be computed by composition with $f$ in $\cB$, and that it is strictly proper if $f_*$ can be computed by exponentiation along $f$ in $\cB$.
\end{definition}

\begin{remark}
We shall see that such maps are sometimes easier to characterize in practice.
In particular, when $\CC$ is a calibration, a map is strictly smooth if the corresponding class $\cC$ of maps in $\cB$ is closed under composition with every pullback of $f$.
And a map $f$ is strictly proper if it is exponentiable and the functor $f_*$ sends maps in $\cC$ to maps in $\cC$.
\end{remark}

\begin{lemma}
\label{lem:strict-smooth}
If $\CC\subset \BB$ is a regular calibration, then a $\CC$-smooth map is strict if and only if it is in $\cC\subset \Arr\cB$.
\end{lemma}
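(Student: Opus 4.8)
The plan is to leverage that the inclusion $U\colon\CC\hookrightarrow\BB$ of a regular calibration is, fiberwise, a \emph{fully faithful and replete} inclusion $U_Y\colon\CC(Y)\hookrightarrow\cB\slice Y$ whose essential image consists of the maps with target $Y$ lying in $\cC$, together with the fact (\cref{prop:codomain-fib}) that $\BB$ is $\BB$-smooth: every left adjoint $f_!^{\BB}$ exists, is computed by post-composition with $f$, and satisfies Beck--Chevalley.

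\emph{From strict to $\cC$.} Suppose $f\colon X\to Y$ is $\CC$-smooth and strict. Since $\CC$ is pointed, the terminal object $\mathrm{id}_X$ of $\cB\slice X$ lies in $\CC(X)$, and $U_X(\mathrm{id}_X)=\mathrm{id}_X$, while $f_!^{\BB}(\mathrm{id}_X)=(f\colon X\to Y)$ by the description of $f_!^{\BB}$ as post-composition with $f$. Evaluating the strictness mate $f_!^{\BB}U_X\to U_Y f_!^{\CC}$ at $\mathrm{id}_X$ thus produces an isomorphism in $\cB\slice Y$ between $(f\colon X\to Y)$ and the object $U_Y\bigl(f_!^{\CC}(\mathrm{id}_X)\bigr)$, which lies in the essential image of $U_Y$. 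By repleteness, $(f\colon X\to Y)\in\CC(Y)$, that is, $f\in\cC$.

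\emph{From $\cC$ to strict.} Conversely, if $f\in\cC$ I would first observe that $f$ is automatically $\CC$-smooth, so that asking it to be strict makes sense and no separate smoothness hypothesis is needed. Regularity makes $\cC$ closed under composition, so post-composition with $f$ restricts from $\cB\slice X$ to a functor $\CC(X)\to\CC(Y)$; using fullness of $U_X$ and $U_Y$ one transposes the adjunction $f_!^{\BB}\dashv f^*$ through $\cB\slice X$ to an adjunction $f_!^{\CC}\dashv f^*$ on the $\CC$-fibers, with $U_Y f_!^{\CC}=f_!^{\BB}U_X$ strictly (so the strictness mate is an identity). Since $\cC$ is closed under base change, every pullback $\bar f$ of $f$ is again in $\cC$, and the same argument supplies $\bar f_!^{\CC}$; finally the Beck--Chevalley $2$-cell computed inside the $\CC$-fibers is carried by the fully faithful $U$ to the one computed inside the slices of $\cB$, which is invertible by \cref{prop:codomain-fib}, hence it is invertible itself. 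Thus $f$ is $\CC$-smooth and strict.

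The only delicate point is this last bookkeeping step: checking that applying $U$ to the mate $\bar f_!^{\CC}\bar v^*\to v^* f_!^{\CC}$ yields the mate $\bar f_!^{\BB}\bar v^*\to v^* f_!^{\BB}$. This is routine once one notes that $U$ is a cartesian morphism of fibrations, hence commutes with the base-change functors, and that on the objects at hand it intertwines the two families of left adjoints via the identifications above. I expect this compatibility, rather than either implication itself, to be the ``hard'' part — and it is hard only in the sense of being fiddly.
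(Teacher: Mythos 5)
Your proof is correct and follows essentially the same route as the paper: the forward direction evaluates the strictness mate at $\mathrm{id}_X$ (the paper's ``applied to $r=\mathrm{id}_X$''), and the converse is the paper's appeal to regularity, i.e.\ closure of $\cC$ under composition and base change. The only difference is that you spell out the details the paper leaves implicit (that post-composition with $f$ restricts to a genuine left adjoint on the $\CC$-fibers and that the Beck--Chevalley mates are inherited from the codomain fibration via the fully faithful inclusion), which is fine but not a different argument.
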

\begin{proof}
A map $u:X\to Y$ is strict if composition with $u$ sends maps $r:\bar X\to X$ in $\cC$ to maps $\bar X\to Y$ in $\cC$.
Applied to $r=id_X$, this implies that $u$ is in $\cC$ and that every strict smooth maps is in $\cC$.
The converse is true by regularity of $\CC$.
\end{proof}

A {\it modality} on the category $\cB$ is a (unique) factorization system $(\cL,\cR)$ such that the factorization (or equivalently the left class $\cL$) is stable under base change.
In this situation, both classes $\cL$ and $\cR$ define calibrations $\LL\subset\BB\supset\RR$ and $\RR$ is even a reflective calibration (with the reflection given by the factorization) \cite{ABFJ:GBM,RSS}.

\begin{lemma}
\label{lem:strict-smooth-modality}
If $\CC=\RR\subset \BB$ is the subuniverse of the right class of a modality on $\cB$, 
then every map in $\cB$ is $\CC$-smooth.
\end{lemma}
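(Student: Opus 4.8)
The plan is to produce, for every map $u : X \to Y$ in $\cB$, a left adjoint $u_!$ to $u^* : \RR(Y) \to \RR(X)$ satisfying the Beck--Chevalley condition for all pullback squares, and moreover to show this can be done stably (i.e. the analogous data exists for every base change of $u$), so that $u$ is $\RR$-smooth. First I would recall the structure: $\RR(X) \subset \cB\slice X$ is the full subcategory of those objects $Z \to X$ whose structure map lies in the right class $\cR$, and by \cref{prop:codomain-fib} the functor $f^* : \cB\slice Y \to \cB\slice X$ always has a left adjoint $f_!$ given by postcomposition with $f$. The key point is that $\RR$ is a \emph{reflective} calibration: the reflection $\cB\slice X \to \RR(X)$ is given by $(\,Z \to X\,) \mapsto (\,Z' \to X\,)$ where $Z \to Z' \to X$ is the $(\cL,\cR)$-factorization of the structure map.

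The construction of $u_!$ is then forced: for $P \to X$ in $\RR(X)$, let $u_!^{\mathrm{cod}}(P) = (P \to X \xto{u} Y)$ be the codomain-fibration pushforward, and define $u_!(P)$ to be its reflection into $\RR(Y)$, i.e. the right factor of the $(\cL,\cR)$-factorization of $P \to Y$. That this is left adjoint to $u^*$ follows by composing the adjunction $u_!^{\mathrm{cod}} \dashv u^*$ on codomain fibrations with the reflection adjunction; one uses here that for $Q \to Y$ in $\RR(Y)$ and $Z \to Y$ arbitrary, maps $Z \to Q$ over $Y$ factor uniquely through the reflection of $Z$, because $Q \to Y$ is in $\cR$ and the reflection unit is in $\cL$ (this is the defining orthogonality of the factorization system).

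The heart of the argument is the Beck--Chevalley condition. Given a pullback square with $\bar u$ the base change of $u$ along $v : \bar X \to X$, I need the mate $\bar u_! \bar v^* \to v^* u_!$ to be invertible on $\RR(Y)$. Unwinding, for $Q \to Y$ in $\RR(Y)$ this compares the reflection of the pulled-back object versus the pullback of the reflection. Since $\RR$ is a modality, the left class $\cL$ is \emph{stable under base change}; hence if $Q \to Y$ is already in $\cR$, its pullback $\bar v^* Q \to \bar X$ is computed inside $\cB\slice{\bar X}$ (the codomain-fibration pullback) and already lies in $\cR$ because right classes are always stable under pullback. So $\bar u_! \bar v^* Q$ is the reflection of $(\bar v^* Q \to \bar X \xto{\bar u} \bar Y)$; I must identify this with $v^*$(reflection of $Q \to Y \to$ along... ) — wait, more carefully: $u_! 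Q$ is the reflection of $Q \to Y$, but $Q \to Y$ is already in $\cR$, so $u_! Q = (Q \to Y)$ itself, viewed in $\RR(Y)$, and then $v^* u_! Q = \bar v^* Q$. On the other side, $\bar v^* Q \to \bar X$ is in $\cR$, so $\bar u_!(\bar v^* Q)$ is the reflection of $\bar v^* Q \to \bar Y$, which need not be $\bar v^* Q$ itself. This is exactly where stability of $\cL$ enters: the canonical map from $\bar u_!^{\mathrm{cod}}(\bar v^* Q) = (\bar v^* Q \to \bar Y)$ to $v^* u_! Q$ must be shown to be the reflection unit, i.e. to lie in $\cL$; and this comparison map is a base change of the reflection unit $Q \to u_! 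Q$ — hmm, one must set this up as a pullback square — which lies in $\cL$, hence its base change does too, making the reflection of $\bar v^* Q \to \bar Y$ agree with $v^* u_! Q$. The main obstacle is organizing this diagram chase cleanly: one must verify that the relevant comparison square really is a pullback so that "base change of a map in $\cL$" applies, and then that the $(\cL,\cR)$-factorization of the composite is obtained by pasting the factorization upstairs with the already-factored $v^* u_! Q \to \bar X \to \bar Y$. Finally, since all of $\RR$, $\cL$, and $\cR$ and their stability properties are preserved under arbitrary base change of $u$, the same argument applies verbatim to every base change, so $u$ is not merely left Beck--Chevalley but stably so, i.e. $\RR$-smooth.

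Alternatively, and perhaps more slickly, one can invoke \cref{lem:strict-smooth}: since $\RR$ is regular, a map is $\RR$-smooth and strict precisely when it lies in $\cR$, so it would suffice to handle maps \emph{not} in $\cR$ separately — but this does not reduce the work, so I would present the direct argument above. The cleanest packaging is: $u_!$ = (codomain $u_!$) followed by ($\cR$-reflection), adjointness is a composite of adjunctions, and Beck--Chevalley reduces to the stability of $\cL$ under pullback together with the stability of $\cR$ under pullback, both of which are part of the definition of a modality.
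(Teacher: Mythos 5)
Your construction of $u_!$ (codomain pushforward followed by the $(\cL,\cR)$-reflection), the adjointness-by-orthogonality argument, and your closing summary---Beck--Chevalley reduces to stability of the factorization under base change---are exactly the paper's proof, which consists of precisely these two observations. However, your detailed unwinding of the Beck--Chevalley condition in the middle goes astray, and the detour it forces (trying to exhibit the comparison map as a base change of a reflection unit, and worrying whether ``the relevant comparison square really is a pullback'') is a symptom of a concrete slip: for $Q$ in $\RR$ over the \emph{domain} of (the relevant base change of) $u$, the object $u_!Q$ is the right factor of the $(\cL,\cR)$-factorization of the \emph{composite} of the structure map of $Q$ with $u$, and it lives over the \emph{codomain} of $u$. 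Your step ``$u_!Q$ is the reflection of $Q\to Y$, but $Q\to Y$ is already in $\cR$, so $u_!Q=(Q\to Y)$ itself'' drops the composition with $u$ entirely; as written it does not even land in the correct fiber, and it would make $u_!$ do nothing. (You also write $\bar v^*Q\to\bar X$ where the pullback lives over $\bar Y$.)

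The clean check, which is all the paper's one-line justification amounts to, is this: factor the composite of the structure map of $Q$ with $u$ as $Q\xto{\ell}Q'\xto{r}(\text{codomain of }u)$ with $\ell\in\cL$, $r\in\cR$, so that $u_!Q=Q'$. Base-changing this whole factorization along $v$ yields a factorization of the pulled-back composite whose left map is a base change of $\ell$, hence in $\cL$ (this is where the modality hypothesis enters), and whose right map is a base change of $r$, hence in $\cR$ (right classes are always pullback-stable). Uniqueness of $(\cL,\cR)$-factorizations then identifies $\bar u_!\bar v^*Q$ with $v^*u_!Q$, and one checks this identification is the mate; no extra pullback verification or unit-chasing is needed, and the same argument applies verbatim to every base change of $u$, giving stability. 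With that correction your proof coincides with the paper's; the aside via \cref{lem:strict-smooth} is indeed a dead end, as you yourself note.
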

\begin{proof}
For a map $u:X\to Y$ in $\cB$, and $r:\bar X\to X$ in $\cR$, 
the left adjoint $u_!(r)$ is given by the right map of the $(\cL,\cR)$-factorization of the map composite map $\bar X\to X\to Y$.
It satisfies the Beck--Chevalley condition because the factorization is stable under base change.
\end{proof}

Notice that \cref{lem:strict-smooth,lem:strict-smooth-modality} together provide examples where smooth and strictly smooth maps do not coincide.

\medskip
We consider now the case of a {\it weak} factorization system $(\cL,\cR)$ on $\cB$.
The class $\cR$ is still closed under base change and defines a regular calibration $\RR\subset \BB$ (but not the class $\cL$ in general).
The following lemma is a classical trick to recognize proper maps.

\begin{lemma}
\label{lem:strict-proper}
If $\CC=\RR\subset \BB$ if the regular calibration associated to the right class of a weak factorization system $(\cL,\cR)$ on $\cB$,
then a  map $u$ is strictly $\CC$-proper if and only if for any base change $\bar u\to u$, the functor $\bar u^*$ preserves the class $\cL$.
In particular, if the factorization system $(\cL,\cR)$ is a modality, then every map is strictly proper.
\end{lemma}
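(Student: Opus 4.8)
The plan is to unwind strict properness using the Remark preceding the lemma. Since $\CC=\RR$ is a calibration the inclusions $\RR(Z)\subset\cB\slice Z$ are full, and one checks that strict $\CC$-properness of $u\colon X\to Y$ is equivalent to two conditions: $u$ is exponentiable in $\cB$ (hence so is every base change $\bar u\to u$, exponentiable maps being stable under pullback), and for every base change $\bar u$ the pushforward $\bar u_*\colon\cB\slice{\bar X}\to\cB\slice{\bar Y}$ carries objects whose underlying $\cB$-map lies in $\cR$ to objects of the same kind. Granting these, $\bar u_*$ restricts to the right adjoint of $\bar u^*\colon\RR(\bar Y)\to\RR(\bar X)$ — an adjunction restricts along full subcategories that both functors preserve — and the Beck--Chevalley isomorphisms for these restrictions are inherited from those of the codomain fibration, which exist by \cref{prop:codomain-fib} since all base changes of $u$ are exponentiable; so $u$ is indeed $\CC$-proper and strictly so, and conversely strict $\CC$-properness forces these conditions (exponentiability by definition, and preservation of $\cR$-objects because the invertible mate identifies the $\BB$-level pushforward of an $\cR$-object with the $\RR$-level one). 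So the lemma reduces to the equivalence: \emph{for $u$ exponentiable, $\bar u_*$ preserves $\cR$-objects for every base change $\bar u$ if and only if $\bar u^*$ preserves $\cL$ for every base change $\bar u$}. For a general base $\cB$ the statement should be read with $u$ presupposed exponentiable; when $\cB$ is locally cartesian closed this is vacuous, and that is the setting the final clause addresses.

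Both directions I would obtain by transporting lifting problems across the adjunctions $\bar u^*\dashv\bar u_*$, using that in a weak factorization system the right class is exactly the class of maps with the right lifting property against the left class, and dually — this holds verbatim, for the induced system, in every slice $\cB\slice Z$, and for the right class the lifting test in $\cB\slice Z$ against $\cL$ coincides with the one in $\cB$, since any lifting square against a map with codomain $Z$ already lives over $Z$. For the direction ``$\cL$ preserved $\Rightarrow$ $\cR$ preserved'' it is enough to argue for a single $\bar u$: given $p$ in $\cR$ over $\bar X$ and any $\ell\colon A\to B$ in $\cL$, naturality of the adjunction bijection turns a lifting problem of $\ell$ against $\bar u_*p$ over $\bar Y$ into one of $\bar u^*\ell$ against $p$ over $\bar X$; since $\bar u^*\ell\in\cL$ and $p\in\cR$ the latter has a filler, hence so does the former, and letting $\ell$ vary over $\cL$ gives $\bar u_*p\in\cR$. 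For the converse, fix a base change $w\colon X'\to Y'$ of $u$ and a map $\ell\colon A\to B$ in $\cL$ (with $B$ over $Y'$); to see $w^*\ell\in\cL$ it is enough to solve an arbitrary lifting problem of $w^*\ell$ against some $q\in\cR$. Pulling $q$ back along the bottom edge presents this as a lifting problem, over $X'\times_{Y'}B$, of $w^*\ell$ against an $\cR$-object; writing $\bar w\colon X'\times_{Y'}B\to B$ for the base change of $w$ along the structure map $B\to Y'$ (so $\bar w$ is again a base change of $u$, and $w^*\ell$ identifies with $\bar w^*\ell$), applying $\bar w_*$ — which preserves $\cR$-objects by hypothesis — converts the problem into one of $\ell$ against an $\cR$-object over $B$, which the weak factorization system on $\cB\slice B$ solves; transporting the filler back solves the original problem, so $w^*$ preserves $\cL$ for every base change $w$ of $u$.

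The last assertion is then immediate: for a modality $\cL$ is stable under base change, so $\bar u^*$ preserves $\cL$ for every $\bar u$, whence every exponentiable map — every map when $\cB$ is locally cartesian closed — is strictly $\CC$-proper. The step I expect to be the real obstacle is the converse implication of the reduced equivalence: its hypothesis concerns the pushforward $\bar w_*$ while its conclusion is a statement purely about pullback functors and the left class, which forces the passage through the auxiliary base change $\bar w$ and the adjunction-transport of lifting problems; keeping straight the nested slice categories with their induced weak factorization systems is the delicate bookkeeping, whereas the reductions to base changes and the inheritance of Beck--Chevalley are routine.
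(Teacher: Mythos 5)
Your proof is correct and takes essentially the same route as the paper: the paper's own proof just unwinds strict properness to ``$\bar u_*$ preserves maps in $\cR$ for every base change'' and asserts that this is equivalent to ``$\bar u^*$ preserves the class $\cL$,'' which is precisely the adjunction/lifting-transposition argument you spell out (including the passage through auxiliary base changes in the converse direction). Your explicit handling of the exponentiability hypothesis and of the restriction/Beck--Chevalley bookkeeping is detail the paper leaves implicit, not a divergence in method.
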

\begin{proof}
A map $u$ is strictly proper if, for any base change $\bar u\to u$, the functor $\bar u_*$ preserves the maps in $\cR$, but this is equivalent to $\bar u^*$ preserving the class $\cL$.
And when $(\cL,\cR)$ is a modality, every $\bar u^*$ preserves the class $\cL$.
\end{proof}

\begin{definition}[Acyclic and localic maps]
\label{def:acyclic}
A map $u:X\to Y$ in $\cB$ is called {\it $\CC$-pre-acyclic}
if $u^*:\CC(Y)\to \CC(X)$ is an equivalence.
A map $u:X\to Y$ in $\cB$ is called {\it $\CC$-acyclic}
if every base change of $u$ is $\CC$-pre-acyclic.
We denote by $\Alpha(\CC)$ (Alpha) the codomain subfibration of $\CC$-acyclic maps. 
Acyclic maps are always both smooth and proper.
A map is called {\it $\CC$-localic} if it is right orthogonal to $\CC$-acyclic maps.
We denote by $\Lambda(\CC)$ the codomain subfibration of $\CC$-localic maps.
\end{definition}

The name `acyclic' is motivated by the following result.
The name `localic' is motivated by an application to topos theory (see third example of \cref{sec:topos}).
In a \oo topos $\cE$, an acyclic class is a class of maps containing all isomorphisms, closed under composition and base change, and under colimits in the arrow category of $\cE$ \cite[Definition 3.2.8]{ABFJ:HS}.
If $(\cL,\cR)$ is a (unique) factorization system on $\cE$, the class $\cL$ is acyclic if and only if $(\cL,\cR)$ is a modality.

\begin{lemma}
\label{lem:acyclic}
If $\cB$ is an \oo topos and if $\CC:\cB\op\to \CAT$ sends colimits to limits, 
then the class of $\CC$-acyclic maps is an acyclic class.
\end{lemma}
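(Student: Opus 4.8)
The plan is to verify directly the three closure conditions in the cited definition of an acyclic class: containing all isomorphisms, closure under composition and base change, and closure under colimits in the arrow category $\Arr\cB$. The first two are already recorded in \cref{def:acyclic} and its surrounding discussion: if $u$ is an isomorphism then $u^*$ is an equivalence, and since the defining condition of $\CC$-acyclicity is phrased stably (``every base change of $u$ is $\CC$-pre-acyclic''), closure under base change is immediate. Closure under composition follows because if $u$ and $v$ are composable $\CC$-acyclic maps then $(vu)^* \simeq u^* v^*$ is a composite of equivalences, and this persists after any base change since base changes of $vu$ decompose as composites of base changes of $u$ and of $v$ by the pasting lemma for pullbacks (which holds in $\cB$ since it has finite limits).

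The substantive content is therefore closure under colimits in $\Arr\cB$. First I would reduce to showing that the class of $\CC$-pre-acyclic maps (not yet stabilized) is closed under colimits in $\Arr\cB$; the stable version then follows because colimits in $\Arr\cB$ commute with the pullback functors $v^*$ in an $\infty$-topos (pullback is a left adjoint, hence colimit-preserving, and colimits in arrow categories are computed pointwise). So let $u = \colim_{i} u_i$ be a colimit of $\CC$-pre-acyclic maps $u_i : X_i \to Y_i$ in $\Arr\cB$, with $X = \colim_i X_i$ and $Y = \colim_i Y_i$ computed in $\cB$. The goal is to show $u^* : \CC(Y) \to \CC(X)$ is an equivalence. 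Here I would use the hypothesis that $\CC$ sends colimits to limits: this gives $\CC(X) \simeq \holim_i \CC(X_i)$ and $\CC(Y) \simeq \holim_i \CC(Y_i)$, and under these identifications the functor $u^*$ is the limit of the diagram of functors $u_i^* : \CC(Y_i) \to \CC(X_i)$. Since each $u_i^*$ is an equivalence and a limit of a diagram of equivalences (over the same indexing diagram, compatibly) is an equivalence, we conclude that $u^*$ is an equivalence, i.e.\ $u$ is $\CC$-pre-acyclic.

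The main obstacle is making precise the claim that ``$u^*$ is the limit of the $u_i^*$'' — that is, checking that the pullback functors assemble into a natural transformation of the two limit diagrams in a way compatible with the identifications $\CC(X)\simeq \holim_i\CC(X_i)$ and $\CC(Y)\simeq \holim_i\CC(Y_i)$. This is a coherence point: one wants a colimit diagram $\Arr\cB$ indexed by some $I^\triangleright$, restrict along $\mathrm{dom},\mathrm{cod}:\Arr\cB\to\cB$ to get two colimit cones in $\cB$, then apply $\CC$ to turn them into limit cones of $\infty$-categories, and observe that the component of $u^*$ over the cone point is induced on limits by the components over $I$. I would handle this by working with $\CC$ as a functor and using that $\mathrm{dom}$ and $\mathrm{cod}$ preserve colimits, so that everything is a diagram of the single functor $\CC$ evaluated on a colimit cocone; the needed compatibility is then functoriality of ``$\CC$ sends colimits to limits.'' The remaining ingredients — pullbacks preserving colimits in an $\infty$-topos, pointwise colimits in $\Arr\cB$, and limits of equivalences being equivalences — are standard and I would cite them rather than prove them.
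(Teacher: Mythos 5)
Your proposal is correct and follows essentially the same route as the paper: the paper's proof likewise observes that (pre-)acyclic maps are the preimage under $\CC$ of the equivalences in $\Arr\CAT$, which are closed under limits, so that ``$\CC$ sends colimits to limits'' gives closure under colimits in $\Arr\cB$, with the isomorphism/composition/base-change conditions being immediate. Your explicit treatment of the stabilization step via universality of colimits in an \oo topos (base change of a colimit is the colimit of base changes) is a detail the paper leaves implicit, but it is the intended argument, not a different one.
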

\begin{proof}
It is easy to see from the definition that $\Alpha(\CC)$ contains all isomorphisms, is closed under composition and base change.
It is also closed under small colimits in $\Arr \cB$ since the functor $\CC$ send colimits to limits and $\Alpha(\CC)$ is the inverse image of the class $\CAT^\simeq\subset \Arr\CAT$ which is closed under limits.
\end{proof}

The $\CC$-acyclic and $\CC$-localic maps sometimes form a factorization system on $\cB$. When this the case, it is always a modality since both classes are stable by base change by definition.

\section{Examples}
\label{sec:ex}

\subsection{Set theory and type theory}

The examples are summarized in \cref{table:set-theory}.
\medskip

\begin{table}[htbp]
\caption{Examples from set theory and type theory}
\label[table]{table:set-theory}
\medskip
\begin{center}
\renewcommand{\arraystretch}{1.2}
\begin{tabularx}{1.05\textwidth}{
|>{\hsize=.9\hsize\linewidth=\hsize\centering\arraybackslash}X
|>{\hsize=.9\hsize\linewidth=\hsize\centering\arraybackslash}X
|>{\hsize=1.1\hsize\linewidth=\hsize\centering\arraybackslash}X
|>{\hsize=1.1\hsize\linewidth=\hsize\centering\arraybackslash}X
|>{\hsize=1\hsize\linewidth=\hsize\centering\arraybackslash}X
|>{\hsize=1\hsize\linewidth=\hsize\centering\arraybackslash}X
|}
\hline
$\cB$ & $\CC$  & $\Sigma(\CC)$ & $\Pi(\CC)$ & $\Alpha(\CC)$& $\Lambda(\CC)$\\
\hline
\hline
Cat. of sets
& a category $C$
& sets $I$ for which $\coprod_I$ exists in $C$
& sets $I$ for which $\prod_I$ exists in $C$
& \multicolumn{2}{c|}{(see text)}
\\
\hline
Cat. of sets
& maps with $\kappa$-small fibers ($\kappa\geq 2$)
& maps with $\Sigma$-small fibers for $\Sigma = \sup \{\sigma\ |\ {\rho<\kappa} \Ra \rho.\sigma<\kappa\}$
& maps with $\Pi$-small fibers for $\Pi = \sup \{\pi\ |\ {\rho<\kappa} \Ra \rho^\pi<\kappa\}$
& bijections
& all maps
\\
\hline
Cat. of sets
& subsets ($\kappa=2$)
& all maps
& all maps
& bijections
& all maps
\\
\hline
$\cB$
& hyperdoctrine $\cB\op\to \Poset$
& all maps
& all maps
& ?
& ?\\
\hline
$\cB$
& codomain fibration
& all maps
& stably exp. maps
& isomorphisms
& all maps
\\
\hline
$\cB$
& $\pi$-clan structure $\DD\subset \BB$
& strict smooth = all maps in $\cD$
& strict proper $\supset$ all maps in $\cD$
& ?
& ?\\
\hline
A category $\cE$ with a subobject classifier $\Omega$
& dominance $\cO\subset\Omega$ 
& smooth = overt~maps 
strict~smooth = maps in $\cO$
& proper = proper~maps 
str.~proper = ?
& \multicolumn{2}{c|}{(see text)}
\\
\hline
A 1-topos $\cE$
& Grothendieck topology $\Omega_j\subset\Omega$ 
& smooth = all~maps 
strict~smooth = closed monos
& proper = ``quasi-compact'' maps
& maps inverted by the localization
& relative sheaves
\\
\hline
\hline
$\cS$, the \oo category of \oo groupoids
& subcategory $\cS\size\kappa$ of $\kappa$-small \oo groupoids
& strict : maps with $\kappa$-small fibers (if~$\kappa$~regular)
& strict : maps with $\kappa$-small fibers (if~$\kappa$~inacc.)
& ?
& ?
\\
\hline
%
\hline
An \oo topos $\cE$
& subuniverse $\TT_n$ of $n$-truncated objects
& the whole universe
(strict smooth maps are the $n$-truncated ones)
& the whole universe
(all maps are strictly proper)
& $(n+1)$\=/conn. maps
& $(n+1)$\=/trunc. maps
\\
\hline
An \oo topos $\cE$
& subuniverse $\RR\subset \EE$ of modal types (for a modality $(\cL,\cR)$ on $\cE$)
& the whole universe
(strict maps are those in $\RR$)
& ?
& décalage class of $\cL$ \cite{ABFJ:GT}
& right class of the décalage modality of \cite{ABFJ:AP}\\
\hline
An \oo topos $\cE$
& subuniverse $\FF\subset \EE$ of sheaves assoc. to a lex loc. $\cF=\cE[\cW^{-1}]$
& the whole universe 
(strict maps are those in $\FF$)
& ?
& maps in $\cW$
& maps in $\FF$
\\
\hline
\end{tabularx}
\end{center}
\end{table}

Any 1-category $C$ represents a functor $\cB\op=\Set\sfop\to \CAT$, sending $I$ to $C^I$.
In this setting, all conditions on maps can be computed fiberwise.
A set $I$ is smooth (proper) if the coproduct $\coprod_I:C^I\to C$
(product functor $\prod_I:C^I\to C$) exists.
Notice that in this case the Beck--Chevalley condition are always fulfilled since everything is determined fiberwise (\cref{rem:BC}).
A map is smooth (proper) if its fibers are smooth (proper) sets.
If $C$ is the terminal category, every set is smooth and proper.
If $C$ is the initial category, the smooth (proper) maps are the surjections.
A set $I$ is acyclic if $C\to C^I$ is an equivalence.
If $C$ is the terminal category, every set is acyclic (and every map is acyclic).
If $C$ is the empty category, the acyclic sets are the non-empty sets (the acyclic maps are surjections and the localic maps are the injections).
If $C$ is otherwise, the only acyclic sets are the singletons, thus the acyclic maps are the bijections and every map is localic.

Similar considerations hold when $\cB=\cS$ is the \oo category of spaces and $C$ some \oo category. 
In particular, \cref{rem:BC} applies and the Beck--Chevalley conditions are always fulfilled.

\medskip
The second example is the one detailled in the introduction of \cref{setting}.
It is an instance of the previous example if and only if $\kappa$ is a regular cardinal (use $C=\Set\size\kappa$).
The example of the fibration of subsets (or of injections), is the case $\kappa=2$.
Every map $u:I\to J$ is smooth and proper.
The functors $u_!$ and $u_*$ are the two direct images of subsets, classically related to existential and universal quantifiers ($u_!A = \{j\ |\ \exists a\in A, a\in u^{-1}(j)\}$, 
$u_*A = \{j\ |\ \forall a\in A, a\in u^{-1}(j)\}$).
More generally, the setting of sets and subsets could be replaced by a hyperdoctrine in the sense of Lawvere \cite{Lawvere:Adjoint,Lawvere:Eq, Seely:hyperdoctrines}.
We recall the correspondence between logical quantifiers and the adjoints to base change in \cref{table:logic}.

\begin{table}[htbp]
\caption{Quantifiers and direct images}
\label[table]{table:logic}
\medskip
\begin{center}
\renewcommand{\arraystretch}{1.2}
\begin{tabularx}{.9\textwidth}{
|>{\hsize=1.1\hsize\linewidth=\hsize\centering\arraybackslash}X
|>{\hsize=1\hsize\linewidth=\hsize\centering\arraybackslash}X
|>{\hsize=1\hsize\linewidth=\hsize\centering\arraybackslash}X
|>{\hsize=1.2\hsize\linewidth=\hsize\centering\arraybackslash}X
|>{\hsize=.8\hsize\linewidth=\hsize\centering\arraybackslash}X
|>{\hsize=.9\hsize\linewidth=\hsize\centering\arraybackslash}X
|}
\cline{2-6}
\multicolumn{1}{c|}{} 
& Indexing object & Families & change of index & left image & right image\\
\hline
Predicate logic & variables & predicates & substitution & $\exists$ & $\forall$ \\
\hline
Dependent type theory & contexts & dependent types & substitution & $\Sigma$ & $\Pi$ \\
\hline
Category theory:
fibration $C\to B$ & object in base & object in fiber & base change $u^*$ & $u_!$ & $u_*$\\
\hline
\end{tabularx}
\end{center}
\end{table}

\medskip
In the case of the codomain fibration, we have seen in \cref{prop:codomain-fib} that all maps are smooth and that proper maps are the exponentiable maps whose pullbacks are also exponentiable.
All the projections $X\times Y\to X$ are proper if and only if 
the category $\cB$ is cartesian closed, and all the maps are proper if and only if the category $\cB$ is locally cartesian closed.
In this setting, only the isomorphisms are acylic and every map is localic.

\medskip
Another example related to type theory is that of a category $\cB$ with a $\pi$-clan structure in the sense of Joyal \cite{Joyal:Clans}.
The clan structure distinguishes a class of maps in $\cD\subset\cB$ closed under base change and containing all isomorphisms, or equivalently a regular calibration $\DD\subset \BB$.
This implies that strict smooth maps are exactly the maps in $\cD$.
And the definition of the $\pi$-clan structure says that every map in $\cD$ is strictly proper \cite[Definition 2.4.1]{Joyal:Clans}.

\medskip
If $\cB$ is a cartesian closed 1-category with a subobject classifier (e.g. 1-topos), we consider the example of a {\it dominance}, which is a regular calibration of monomorphisms classified by a subobject $\cO\subset\Omega$ of the subobject classifier \cite{Escardo:Pittsburgh,Hyland:SDT,Rosolini:PhD}.
Intuitively, the object $\cO$ classifies the subobjects meant to be ``open'' in the sense of topology, and the exponential $\cO^A\subset\Omega^A$ is the lattice of open subobjects sitting within that of all subobjects.
The smooth (proper) maps define a notion of open (compact) maps in the sense of topology.
The strictly smooth maps are those classified by $\cO$ (\cref{lem:strict-smooth}).
The strictly proper maps are those maps for which the direct image can be computed within the posets of all subobjects.
Acyclic and localic maps are considered in the literature, but (seemingly) only with the base change property along cartesian projections and not along arbitrary maps.
They are called {\it $\cO$-equable} maps and {\it $\cO$-replete} maps in \cite{Hyland:SDT}.

When $\cB$ is a 1-topos, any Grothendieck topology defines a dominance $\Omega_j\subset \Omega$ where $\Omega_j$ is the classifier of closed monos.
In this case, all maps are smooth, the closed monos are the strictly smooth maps.
The acyclic maps are the maps inverted in the localization by the topology,
and the localic maps are the relative sheaves.
A map $A\to 1$ is proper if and only if $A$ it is quasi-compact (every covering family has a covering finite subfamily).
General proper maps can be described by a relative version of the same condition.

\medskip
The \oo category $\cS$ of \oo groupoids is a higher categorical model for dependent type theory with univalence: precisely, the small subuniverses correspond to univalent maps in $\cS$.
It seems difficult to describe the smooth, proper, acyclic and localic maps for an arbitrary subuniverse, but it is easier with strict smooth and proper maps.
We shall only consider the subuniverse of $\kappa$-small spaces.
When $\kappa$ is regular, the strict smooth maps are the maps with $\kappa$-small fibers by \cref{lem:strict-smooth}.
When $\kappa$ is inaccessible, the strict proper maps are the maps with $\kappa$-small fibers by \cref{lem:strict-proper}.
In particular, if $\kappa$ is the inaccessible cardinal bounding the size of small objects, all maps are smooth and proper.
In this case, the acyclic maps are reduced to equivalences and all maps are localic.

\medskip
In the example of an \oo topos $\cE$, we denote the universe by $\EE$.%
\footnote{The same example could be presented in the setting of 1-topoi, but the formalism of \oo topoi is just easier.}
By definition of an \oo topos, the functor $\EE:\cE\op\to \CAT$ sends colimits of $\cE$ to limits in $\CAT$, 
and the interesting subuniverses are those satisfying a similar condition (corresponding to local classes of maps \cite[Proposition 6.1.3.7]{Lurie:HTT}).
For $n\geq -2$, we denote by $\cC_n$ and $\cT_n$ the classes of $n$-connected and $n$-truncated maps.
For example, the class $\cC_{-1}$ and $\cT_{-1}$ are the classes of surjections and monomorphisms.
Every map can be factored uniquely into an $n$-connected map followed by an $n$-truncated maps, and this factorization is stable under base change (it is a modality).
The class $\cT_n$ contains all isomorphisms, is closed under composition, and is local. 
It defines a regular calibration $\TT_n\subset\EE$ which is also a reflective subuniverse (where the reflection is given by the factorization).
Every map is smooth for $\TT$, and the strict smooth maps are those in $\TT$.
Every map is strictly proper (hence proper) for $\TT$ because dependent products preserve the truncation level of objects.
The $\TT_n$-acyclic maps and $\TT_n$-localic maps can be characterized as the $(n+1)$-connected maps and $(n+1)$-truncated maps (see below).

More generally, a modality on $\cE$ is a (unique) factorization system $(\cL,\cR)$ on $\cE$ such that both classes are stable under base change.
Then both classes $\cL$ and $\cR$ are local by \cite[Proposition 3.6.5]{ABFJ:GBM} and define 
subuniverses $\LL,\RR\subset\EE:\cE\op\to \CAT$.
We shall consider the case $\CC=\RR$.
Since the class $\cR$ is closed under composition and base change, \cref{lem:strict-smooth} shows that the strict smooth maps are all maps in $\cR$.
But the subuniverse actually admits sums indexed by all maps in $\cE$.
Given a map $u:X\to Y$ in $\cE$ and a map $r:\bar X\to X$ in $\cR$, the $(\cL,\cR)$-factorization of the composite map $\bar X\to X\to Y$ gives maps $\bar X\to \bar Y$ in $\cL$ and $r':\bar Y\to Y$ in $\cR$.
The image of $r$ by $u_!$ is the map $\bar Y\to Y$.
The Beck--Chevalley condition holds because the factorization is stable under base change.
A more conceptual way to see this is to say that $\RR$ is in fact a reflective subuniverse (where the reflection is given by the factorization) and therefore complete under all sums existing in $\EE$.

In this example, it seems difficult to describe the proper maps without further assumption on $\RR$.
However, the acyclic maps are exactly the maps in the {\it décalage} of the class $\cL$ \cite[2.2.7]{ABFJ:GT} (they are also the `fiberwise $\RR$-equivalences' of \cite[Def.3.3.1]{ABFJ:HS}).
In particular, they form an acyclic class in $\cE$ \cite[Theorem 3.3.9]{ABFJ:HS}.
In this context, acyclic and localic maps define a modality on $\cE$ which is detailled in \cite[Theorem 2.3.32]{ABFJ:AP}.

Finally, any left-exact localization of \oo topoi  $q^*:\cE\to \cE[\cW^{-1}]$ provide a modality $(\cW,\cF)$ on $\cE$, where $\cW$ is the class if maps inverted by $q^*$ and $\cF$ is the class of ``relative sheaves'' \cite{RSS,ABFJ:HS}.
Such a modality is left-exact in the sense that the factorization of a map preserves finite limits (in the arrow category).
In fact, there is a bijection between left-exact localizations and left-exact modalities \cite{ABFJ:HS}.
Let $\WW\subset\EE\supset\FF$ be the corresponding subuniverses.
If $\EE':\cE[\cW^{-1}]\op\to \CAT$ is the universe of the \oo topos $\cE[\cW^{-1}]$, one can show that $\FF=\EE'\circ q^*:\cE\op\to \CAT$.
Since left-exact modalities are fixed by décalage \cite[Lemma 2.4.6~(1)]{ABFJ:GT}, the acyclic (localic) maps coincides with the class $\cW$ ($\cF$).

\subsection{Category theory}

We now consider examples from category theory.
The study of fibrations of categories was the motivation of Grothendieck for introducing his notion of smooth and proper functors.
The examples are summarized in \cref{table:category}.

\begin{table}[htbp]
\caption{Examples from category theory}
\label[table]{table:category}
\medskip
\begin{center}
\renewcommand{\arraystretch}{1.3}
\begin{tabularx}{.9\textwidth}{
|>{\hsize=.8\hsize\linewidth=\hsize\centering\arraybackslash}X
|>{\hsize=.9\hsize\linewidth=\hsize\centering\arraybackslash}X
|>{\hsize=1.1\hsize\linewidth=\hsize\centering\arraybackslash}X
|>{\hsize=1.2\hsize\linewidth=\hsize\centering\arraybackslash}X
|}
\hline
$\cB$ & $\CC$  & $\Sigma(\CC)$ & $\Pi(\CC)$ 
\\
\hline
\hline
Cat. of categories
& all functors
& all functors
& Conduch\'e fib.
\\
\hline
Cat. of categories
& left fibrations
& smooth = ?
strictly~smooth = left~fib.
& proper = ?
strictly~proper = Lurie-proper functor (=~Grothendieck-smooth)
\\
\hline
Cat. of categories
& right fibrations
& smooth = ? 
strictly~smooth  = right~fib.
& proper = ?
strictly~proper = Lurie-smooth functors (=~Grothendieck-proper)
\\
\hline
Cat. of categories
& cocartesian fibrations
& smooth = ?
strictly~smooth = cocart.~fib. 
& proper = ?
strictly~proper = Cond. + $u^*$ pres. ff left adj.
\\
\hline
Cat. of categories
& cartesian fibrations
& smooth = ? 
strictly~smooth = cart.~fib.
& proper = ?
strictly~proper = Cond. + $u^*$ pres. ff right adj.
\\
\hline
\end{tabularx}
\end{center}
\end{table}


\medskip
The first example is that of the universe of $\cB=\Cat$.
It will be the base of all the other examples.
We denote by $\CCat$ the universe of $\Cat$ (which can be thought of either as the codomain fibration, or as the slice functor $C\mapsto \Cat\slice C$).
The category $\Cat$ is cartesian closed but not locally cartesian closed.
The exponentiable functors are the Conduch\'e fibrations (see \cite[Lemma 1.11]{Ayala-Francis:fibrations} for a higher categorical account).
The acyclic maps are the equivalences of categories and every functor is localic.

\medskip
The following examples will study various kinds of ``fibrations'' condition on functors.
This will lead to consider functors $\CC:\Cat\sfop \to \CAT$ with a natural forgetful morphism $\CC\to \CCat$ into the universe of $\Cat$ (or equivalently a morphism between the corresponding fibrations over $\Cat$).

Let us start with the the functor $\LFib:\Cat\sfop\to \CAT$ sending a category $C$ to its category $\LFib(C) = \fun C\cS$ of (small) left fibrations.
This functor is (up to size issues) representable by the category $\cS$.
The Grothendieck construction provides a natural transformation $\LFib(C) \to \Cat\slice C$ and we can talk about strict notions.
Recall that the left fibrations are the right class of a (unique) factorization system on $\Cat$, where the left class is that of initial functors.
The characterization of smooth and proper maps is open.
The previous factorization system is not stable under base change and we cannot apply \cref{lem:strict-smooth-modality}.
But the strict smooth maps are exactly the left fibrations themselves.
The class of proper functors contains right fibrations 
And we can use \cref{lem:strict-proper} to characterize the strict proper maps as the exponentiable functors $u:C\to D$ such that for any base change $\bar u\to u$, the pullback along $\bar u$ preserves the class of initial functors.
This is exactly the notion of proper functor of \cite[dual of Definition 4.1.2.9]{Lurie:HTT} and the notion of smooth functor of Grothendieck in Pursuing Stacks (and also in \cite[21.1]{Joyal:QC} and \cite[dual of Definition 4.4.1]{Cisinski:book}).%
\footnote{The reversal of names is due to a preference for covariant or contravariant functors in the definitions. Grothendieck's convention uses presheaves, we have preferred to use covariant functors.}
In particular, right fibrations are strict proper functors for $\LFib$.
The previous characterization of strict proper functors show that they are also left-final functors in the sense of Ayala--Francis since they satisfy the condition of \cite[Lemma 4.1.1~(a)]{Ayala-Francis:fibrations}.
However, the condition to be left-final seems strictly weaker.
In the dual situation of right fibrations, the notion of smooth and proper are reversed.

\medskip
Finally, we can look at the example of (small) cocartesian (and cartesian) fibrations.
The functor of interest is $C\mapsto \CcFib(C) = \fun C \Cat$. 
Up to size issues, it is represented by $\Cat$ itself.
The class of cocartesian fibrations is almost the right class of a weak factorization system on $\Cat$ (in 1-categories, it is the right class of an algebraic factorization system \cite{Bourke-Garner}).
The corresponding left class is the class of fully faithful left adjoint functor.
Any functor $f:C\to D$ admits a factorization $C\to f\downarrow D\to D$ into a fully faithful left adjoint functor followed by a cocartesian fibration (where $f\downarrow D$ if the fiber product of $C\to D \xot {dom} \Arr D$).
Again, the smooth and proper maps are difficult to find.
The strict smooth maps are the cocartesian fibrations themselves.
The previous factorization is enough to apply \cref{lem:strict-proper} to characterize the strictly proper maps as the exponentiable functors $u:C\to D$ such that $u^*$ preserves fully faithful left adjoint functors. 
This class includes that of cartesian fibrations.
In the dual situation of cartesian fibrations, the notion of smooth and proper are reversed.

\medskip
In all the ``fibration'' examples, the acyclic maps are the equivalences and every functor is localic.
To see this, recall that a functor $u:C\to D$ induces an equivalence $u_!:\cS^C\rightleftarrows \cS^D:u^*$ if and only if it is a Morita equivalence.
The acyclic maps are Morita equivalences which are stable under base change.
This implies that they must be (essentially) surjective functors.
Since $u_!$ is an equivalence, $u$ is fully faithful, and is therefore an actual equivalence.

\subsection{Topology and geometry}
\label{sec:topos}
We now consider examples where the base category $\cB$ is a category of topological or geometrical objects.
We are going to see a tight connection between direct images functors
and classical topological conditions.
The examples are summarized in \cref{table:topology}.

\begin{table}[htbp]
\caption{Examples from topology and geometry}
\label[table]{table:topology}
\medskip
\begin{center}
\renewcommand{\arraystretch}{1.2}
\begin{tabularx}{\textwidth}{
|>{\hsize=.7\hsize\linewidth=\hsize\centering\arraybackslash}X
|>{\hsize=1.2\hsize\linewidth=\hsize\centering\arraybackslash}X
|>{\hsize=1.1\hsize\linewidth=\hsize\centering\arraybackslash}X
|>{\hsize=1\hsize\linewidth=\hsize\centering\arraybackslash}X
|>{\hsize=1\hsize\linewidth=\hsize\centering\arraybackslash}X
|>{\hsize=1\hsize\linewidth=\hsize\centering\arraybackslash}X
|}
\hline
$\cB$ & $\CC$  & $\Sigma(\CC)$ & $\Pi(\CC)$ & $\Alpha(\CC)$& $\Lambda(\CC)$\\
\hline
\hline
Cat. of locales
& all maps
& all maps
& exponentiable maps
& homeo\-morphisms
& all maps\\
\hline
Cat. of locales
& open immersions (rep. by Sierpi\'nski space)
& open maps (strict: open immersions)
& proper maps (strict: locally compact and proper)
& homeo\-morphisms
& all maps\\
\hline
\hline
Cat. of 1-topoi
& open immersions (rep. by Sierpi\'nski topos)
& open maps \cite{Johnstone:Elephant}
& proper maps \cite{Moerdijk-Vermeulen} (strict: exponentiable + proper)
& hyper\-connected morphisms
& localic morphisms\\
\hline
Cat. of 1-topoi
& étale maps (rep. by object classifier)
& locally connected maps \cite{Johnstone:Elephant}
& tidy maps \cite{Moerdijk-Vermeulen}
& equivalences 
& all morphisms\\
\hline
\hline
Cat. of \oo topoi
& étale maps (rep. by object classifier)
& locally contractible maps \cite{Martini-Wolf:internal}
& proper maps \cite{Martini-Wolf:proper}
& equivalences 
& all morphisms\\
\hline
Cat. of \oo topoi
& $n$-tr. étale maps (rep. by $n$-truncated object classifier)
& locally $n$-connected maps (by methods similar to \cite{Martini-Wolf:internal})
& $n$-proper maps (by methods similar to \cite{Martini-Wolf:proper})
& hyper-${(n+1)}$\=/connected morphisms
& ${(n+1)}$-localic morphisms \\
\hline
\hline
Cat. of schemes
& torsion étale sheaves
& smooth map $\supset$ smooth morphisms \cite[Thm~7.3]{Freitag-Kiehl}
& proper map $\supset$ proper morphisms \cite[Thm~6.1]{Freitag-Kiehl}
& ?
& ?\\
\hline
\end{tabularx}
\end{center}
\end{table}

\medskip
In the first example, $\cB=\Locale$ is the category of locales%
\footnote{Using locales instead of topological spaces simplify the computations. Similar considerations are true for sober spaces.}
with the codomain fibration.
Every map is smooth, and the proper maps are the exponentiable ones.
The exponentiable locales are the locally compact ones (i.e. the locales such that $\Op X$ is a continuous lattice) \cite[Theorem 4.11]{Johnstone:Stone}.
Acyclic maps are the isomorphisms, and every map is localic.

The next example is that of the functor $\Locale\sfop\to \CAT$ sending a locale $X$ to its frame $\Op X$ of open domains.
This functor is represented by the Sierpi\'nski space.
Let us say that a map is an {\it open immersion} if it is isomorphism to the inclusion of an open.
The fibration corresponding to $\Op-$ is the subfibration of the codomain fibration of $\Locale$ spanned by open immersions.
This shows that the notion of strict smooth and proper maps makes sense.

The smooth morphisms are almost by definition the open morphisms of topology (this can be shown directly, or deduced from the similar result for topoi \cite[Lemma C.3.1.10]{Johnstone:Elephant}, see also \cite{Escardo:topology}).
By \cref{lem:strict-smooth} the strict smooth maps are the open immersions that are also open morphisms, but that is the case of every open immersion.
The proper morphisms are the proper morphisms of topology (i.e. the universally closed morphisms, this can be seen directly by considering left adjoints on the posets $\Op X\op$, or deduced from topos theory \cite[Lemma C.3.2.81]{Johnstone:Elephant}, see also \cite{Escardo:topology,Escardo:compact}).
The characterization of strict proper maps is open.
An open immersion is proper if and only if it is isomorphic to the inclusion of a clopen.
The computation of acyclic and localic maps is straighforward.

\medskip
In the third example, $\cB$ is the category of 1-topoi and geometric morphisms \cite{Johnstone:Elephant, Anel-Joyal:topo-logie}.
If we look at the codomain fibration, the only non-trivial class of maps is that of exponentiable maps \cite{JJ}.

Another important fibration is the one of open sub-1-topoi (or open immersions), sending a 1-topos $\tX$ to the poset of subterminal objects in its category of sheaves $\Sh\tX$ (its dual \emph{1-logos} in the sense of \cite{Anel-Joyal:topo-logie}).
This fibration is represented by the Sierpi\'nski topos (dual to the 1-logos $\Arr\Set$).
The identification of smooth maps as open morphisms of 1-topoi is done in \cite[Theorem C.3.1.28]{Johnstone:Elephant}, 
and that of proper maps as proper morphisms of 1-topoi is done in \cite[Corollary I.5.9]{Moerdijk-Vermeulen} and in \cite[Theorem C.3.1.28]{Johnstone:Elephant}.
Both references use a ``weak'' version of Beck--Chevalley conditions (where the mate transformation is only monic) for arbitrary sheaves. 
But restricted to subterminal objects this condition recovers the one from \cref{def:smooth} and this can be shown to be equivalent to the weak condition (see \cite[Proposition A.4.1.17]{Johnstone:Elephant} and \cite[Proposition 3.2]{Moerdijk-Vermeulen}).

Open immersions can be composed and define a regular calibration on $\Topos$.
Then, \cref{lem:strict-smooth} shows that the strict smooth maps are exactly the open immersion of 1-topoi.
The characterization of strict proper maps is open.
An open immersion of 1-topoi is proper if it is also the inclusion of a closed sub-1-topoi (corresponding to a decidable subterminal object).
Interestingly, in this example the notion of acyclic and localic maps recover the hyperconnected and localic morphisms of topoi (this is in fact the motivation for the name \emph{localic}).
The proof is straightforward for hyperconnected maps, and the fact that the right orthogonal to hyperconnected morphisms are the localic morphisms is \cite[Lemma A.4.6.4]{Johnstone:Elephant}.

\medskip
The next example is the case of the fibration of étale maps over the category of 1-topoi, sending a 1-topos $\tX$ to its category of sheaves of sets (its dual 1-logos) $\Sh \tX$.
This fibration is representable by the ``object classifier'' or the ``topos line'' (that we shall denote $\tA$, its 1-logos of sheaves is $\fun {\textsf{finset}}\Set$) and the existence of sums and products for the étale fibration can be interpreted as sums and products structures on the 1-topos $\tA$.
In this case, the smooth maps are the locally connected morphisms of 1-topoi \cite[Corollary C.3.3.16]{Johnstone:Elephant}, 
and the proper maps are the tidy morphisms of 1-topoi, see \cite[Corollary III.4.9]{Moerdijk-Vermeulen} or \cite[Corollary C.3.4.11]{Johnstone:Elephant}.
The strict smooth maps are all the étale morphisms.
The characterization of strict proper maps is open.
In this setting, the acyclic maps are the equivalences and every map is localic.

Interestingly, not every étale map is proper: the intersection of the two classes is the class of  finite maps.
This means that the ``topos line'' $\tA$ does not have arbitrary products indexed by itself, but only finite products.
This might be surprising since categories of sheaves are always locally cartesian closed, but the dependent products are not preserved by geometric morphisms, only finite products are.

\medskip
The generalization of the previous results to \oo topoi is quite rich!
For the case of the codomain fibration, exponentiable \oo topoi have been characterized in \cite[Theorem 21.1.6.12]{Lurie:SAG} and \cite{Anel-Lejay:topos-exp}.
The case of the fibration of étale morphisms has been studied recently by Martini and Wolf.
If $\cB=\Topos_\infty$ is the category of \oo topoi and $\CC(\tX) = \iSh \tX$ is the category of higher sheaves (the \oo logos dual to $\tX$),
then the smooth maps are the locally contractible morphisms of \oo topoi \cite{Martini-Wolf:internal},
and the proper maps are the proper morphisms of \oo topoi \cite{Lurie:HTT,Martini-Wolf:proper}.
The acyclic maps are equivalences and every map is localic.

It is interesting to restrict the fibration of higher sheaves (aka higher etale morphisms) to {\it $n$-truncated} sheaves only (the case $n=-1$ recovers the fibration of open immersions).
In this case, it is easy to adapt the results of Martini and Wolf to characterize 
the smooth maps as some {\it locally $n$-connected} morphisms of \oo topoi%
\footnote{morphisms $u$ for which the inverse image $u^*$ has a (local) left adjoint $u_!$ when restricted to the subcategories of $n$-truncated sheaves.}
and the proper maps are the {\it $n$-proper} morphisms of \oo topoi%
\footnote{morphisms $u$ whose direct image $u_*$ preserve (internal) filtered colimits of $n$-truncated objects.}
The strict smooth maps are the $n$-truncated étale morphisms of topoi.
The corresponding acyclic and localic maps define notions of hyper-$(n+1)$-connected morphisms and $(n+1)$-localic morphisms, and every geometric morphism should factors into a hyper-$(n+1)$-connected morphism followed by a $(n+1)$-localic morphism.
Moreover, an \oo topos $\tX$ should be $n$-localic in the sense of \cite[Definition 6.4.5.8]{Lurie:HTT} if and only if the morphism $\tX\to 1$ is $n$-localic in the previous sense.

\'Etale maps define a regular calibration of the universe of $\Topos_\infty$ and the notions of strictly smooth and strictly proper maps can be defined.
By \cref{lem:strict-smooth}, the strict smooth maps are exactly the etale maps.
The strict proper maps are exponentiable maps $u$ whose direct image $u_*$ preserves etale maps.
Their characterization is open.

\medskip
The last example of \cref{table:topology} is in algebraic geometry, where the base category is the category $\mathsf{Sch}$ of (noetherian) schemes, and the functor $\mathsf{Sch}\sfop \to \CAT$ is the one sending a scheme $X$ to its category of torsion étale sheaves \cite{Freitag-Kiehl}.
The characterization of smooth and proper maps in general is open, but \cite[Theorems 6.1 \& 7.3]{Freitag-Kiehl} show 
that smooth morphisms of schemes are smooth maps (if the torsion is prime to the characteristic), and 
that proper morphisms of schemes are proper maps.
This setting is the one that inspired Grothendieck to name his notions of smooth and proper functors \cite{Maltsiniotis:aspheric}.
The characterization of acyclic and localic maps is an open question.



\providecommand{\bysame}{\leavevmode\hbox to3em{\hrulefill}\thinspace}
\providecommand{\MR}{\relax\ifhmode\unskip\space\fi MR }
\providecommand{\MRhref}[2]{%
  \href{http://www.ams.org/mathscinet-getitem?mr=#1}{#2}
}

\end{document}